\newtheorem{theorem}{Theorem}
\newtheorem{proposition-appn}{Proposition~A\ignorespaces}
\newtheorem{lemma-appn}{Lemma~A\ignorespaces}
\newtheorem{theorem-appn}{Theorem~A\ignorespaces}
\theoremstyle{definition}
\newtheorem{remark}{Remark}
\let\oldproofname=\proofname
\renewcommand{\proofname}{\rm\bf{\oldproofname}}
\def\Pr{{ \mathrm{Pr} }}
\def\E{{ \mathrm{E} }}
\def\Var{{ \mathrm{Var} }}
\begin{document}

\title[]{Expectations of some ratio-type estimators under the gamma distribution}

\author{Jia-Han Shih}

\address{Department of Applied Mathematics, National Sun Yat-sen University, Kaohsiung, Taiwan}

\email{jhshih@math.nsysu.edu.tw} 

\keywords{Atkinson index, Dirichlet distribution, Theil index, Gini index, Proportion-sum independence theorem} 

\subjclass[2010]{60E10, 62E10, 62F10} 

\date{\today}

\begin{abstract}
We study the expectations of some ratio-type estimators under the gamma distribution. Expectations of ratio-type estimators are often difficult to compute due to the nature that they are constructed by combining two separate estimators. With the aid of Lukacs' Theorem and the gamma-beta (gamma-Dirichlet) relationship, we provide alternative proofs for the expected values of some common ratio-type estimators, including the sample Gini index, the sample Theil index, and the sample Atkinson index, under the gamma distribution. Our proofs using the distributional properties of the gamma distribution are much simpler than the existing ones. In addition, we also derive the expected value of the sample variance-to-mean ratio under the gamma distribution.
\end{abstract}

\maketitle 

\section{Introduction}\label{sec:intro}

Ratio-type statistical measures arise when two quantities are combined to form a scale-free population characteristic. Since the numerator and denominator are measured in the same units, their ratio is scale-invariant, resulting in a standardized quantity that is easy to interpret. If the goal is to produce a scale-free measure, a natural choice for the denominator is the population mean. For instance, the well-known Gini index \citep{gini1912} is a measure of income inequality, which is defined as the Gini mean difference divided by the population mean. For more details about the Gini index, we refer to Baydil et al.\ \citep{BAYDIL2025} and the references therein.

Estimation of ratio-type measures is typically carried out by estimating the numerator and denominator separately. If both components are estimated consistently, their ratio will also yield a consistent estimator for the population ratio thanks to the continuous mapping theorem. However, the finite-sample properties of a ratio-type estimator, such as its bias, can be challenging to derive due to the fact that the estimator is formed by combining two separate estimators. Recently, Baydil et al.\ \cite{BAYDIL2025} proved that the sample Gini index is actually unbiased under the gamma distribution by using the identity (\ref{id}) and the Laplace transform.


In this note, we give a simple alternative proof for the unbiasedness of the sample Gini index under the gamma distribution. The main tool of our proof is Lukacs' Theorem \citep{Lukacs1955}, which is stated below.

\begin{theorem}[Lukacs \citep{Lukacs1955}]
Let $X_1$ and $X_2$ be nondegenerate, independent, and positive random variables. Then the proportion $X_1 / (X_1 + X_2)$ and the sum $X_1+X_2$ are independent if and only if $X_1$ and $X_2$ are gamma random variables with the same scale (or rate) parameter.
\end{theorem}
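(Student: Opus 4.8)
Since the statement is an ``if and only if,'' I would handle the two directions separately, noting at the outset that essentially all of the content lies in the ``only if'' part. For the ``if'' direction I would argue by a direct change of variables (the gamma--beta relationship): if $X_1\sim\mathrm{Gamma}(\alpha_1,\theta)$ and $X_2\sim\mathrm{Gamma}(\alpha_2,\theta)$ are independent, then writing $S=X_1+X_2$ and $W=X_1/(X_1+X_2)$, the substitution $(x_1,x_2)=(sw,s(1-w))$ has Jacobian $s$ and turns the product density of $(X_1,X_2)$ into a $\mathrm{Gamma}(\alpha_1+\alpha_2,\theta)$ density in $s$ times a $\mathrm{Beta}(\alpha_1,\alpha_2)$ density in $w$; since this factors, $S$ and $W$ are independent.

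For the ``only if'' direction the plan is to convert the independence of $W$ and $S$ into an ordinary differential equation for the Laplace transform of $X_1$. Write $\phi_j(t)=\E[e^{-tX_j}]$ for $t>0$; each $\phi_j$ is smooth, strictly positive and strictly decreasing on $(0,\infty)$, with $\phi_j^{(k)}(t)=(-1)^k\E[X_j^ke^{-tX_j}]$, all finite for $t>0$. First note that $W$ must be nondegenerate, since otherwise $X_1=WS$ and $X_2=(1-W)S$ would both be deterministic functions of $S$, contradicting the independence and nondegeneracy of $X_1,X_2$. Because $W\in(0,1)$ is bounded, independence gives $\E[W^k\mid S]=\E[W^k]=:c_k$ for every $k$, and hence, using $X_1=WS$ and the independence of $X_1$ and $X_2$,
\[
\phi_1^{(k)}(t)\,\phi_2(t)=(-1)^k\E\!\left[X_1^k e^{-tS}\right]=(-1)^k c_k\,\E\!\left[S^k e^{-tS}\right]=c_k\,(\phi_1\phi_2)^{(k)}(t),\qquad t>0,
\]
together with the analogous identity for $\phi_2$ with $d_k=\E[(1-W)^k]$. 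Taking $k=1$ gives $(1-c_1)(\log\phi_1)'=c_1(\log\phi_2)'$, which, using $\phi_j(0^+)=1$ and $c_1\in(0,1)$ (as $0<W<1$ a.s.), integrates to $\phi_2=\phi_1^{\rho}$ with $\rho=(1-c_1)/c_1>0$.

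I would then substitute $\phi_2=\phi_1^{\rho}$ into the $k=2$ identity and cancel a nonvanishing power of $\phi_1$; this should collapse, after setting $g=(\log\phi_1)'$, to the autonomous equation $g'=(\kappa-1)g^2$, where $\kappa$ depends only on $c_2$ and $\rho$. Since $g<0$ on $(0,\infty)$, this integrates to $g(t)=-1/((\kappa-1)t+C)$, and one more integration, together with $\phi_1(0^+)=1$, forces $\phi_1(t)=(1+bt)^{-\alpha}$ on $(0,\infty)$. The requirement that $\phi_1$ be an honest Laplace transform of a positive nondegenerate variable on all of $(0,\infty)$ then pins down $\alpha>0$ and $b>0$ (equivalently $\kappa>1$), so by uniqueness of Laplace transforms $X_1\sim\mathrm{Gamma}(\alpha,b^{-1})$; and $\phi_2=\phi_1^{\rho}=(1+bt)^{-\rho\alpha}$ shows $X_2\sim\mathrm{Gamma}(\rho\alpha,b^{-1})$, with the same rate.

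The heart of the matter is the ``only if'' direction. The step I expect to be most delicate is the honest derivation of the clean ODE $g'=(\kappa-1)g^2$ from the $k=2$ relation without getting buried in algebra, and the reassurance that no a priori moment hypotheses are being smuggled in — the device is to use $\phi_j$ and its derivatives only on $(0,\infty)$, where every expectation in sight is automatically finite, and to recover $\E[X_j]<\infty$ only afterward from the closed form $(1+bt)^{-\alpha}$. One should also check explicitly that the case $\kappa=1$ makes $\phi_1$ exponential, i.e. $X_1$ degenerate (hence is excluded), and that $\kappa<1$ is incompatible with $\phi_1>0$ on all of $(0,\infty)$, so that $\kappa>1$ is indeed forced.
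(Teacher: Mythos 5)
The paper does not prove this statement at all: Lukacs' Theorem is imported as a known result, cited to Lukacs (1955), and is only \emph{used} (in the Gini and VMR sections) to get independence of $Y_1+Y_2$ and $Y_1/(Y_1+Y_2)$. So there is no in-paper proof to compare against, and your attempt has to be judged on its own. On that basis it is essentially sound, and it follows the classical route: the ``if'' direction by the gamma--beta change of variables (which is also the computation the paper relies on elsewhere), and the ``only if'' direction by converting $\E[W^k\mid S]=c_k$ into the functional equations $\phi_1^{(k)}\phi_2=c_k(\phi_1\phi_2)^{(k)}$ on $(0,\infty)$ and solving the resulting ODEs. Your $k=1$ step correctly yields $\phi_2=\phi_1^{\rho}$ with $\rho=(1-c_1)/c_1$, and the $k=2$ step does collapse as you predict: writing $g=(\log\phi_1)'$ one gets $\bigl(1-c_2/c_1\bigr)g'=\bigl(c_2/c_1^2-1\bigr)g^2$, i.e.\ $g'=Cg^2$ with
\[
C=\frac{c_2-c_1^2}{c_1(c_1-c_2)}=\frac{\Var(W)}{\E(W)\,\E\{W(1-W)\}}.
\]
Two points deserve to be made explicit rather than left as ``should collapse.'' First, you must check that the coefficient of $g'$ is nonzero before solving for it; this holds because $c_1-c_2=\E\{W(1-W)\}>0$ since $0<W<1$ almost surely. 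Second, the sign question you defer to the end is settled immediately by the displayed formula: $C>0$ exactly because $W$ is nondegenerate ($\Var(W)>0$), which is guaranteed by your (correct) opening observation that a degenerate $W$ would make $X_2$ a deterministic function of $X_1$. With $C>0$ and $g<0$, the integration gives $g(t)=-1/(Ct+C_0)$, and $\phi_1(0^+)=1$ rules out $C_0=0$ and yields $\phi_1(t)=(1+bt)^{-\alpha}$ with $\alpha=1/C>0$, $b=C/C_0>0$; uniqueness of Laplace transforms then finishes the argument. Your strategy of working only on $t>0$, where all the expectations $\E[X_j^ke^{-tX_j}]$ are automatically finite, correctly avoids smuggling in moment assumptions. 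So the proof is correct in outline with only those two bookkeeping checks to supply.
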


Lukacs' Theorem shows that the independence between a proportion and the corresponding sum uniquely characterizes the gamma distribution. Applying Lukacs' Theorem along with the gamma-beta relationship, one can readily verify the unbiasedness of the sample Gini index. To further illustrate the usefulness of Lukacs' Theorem, we derive the expectation of the sample variance-to-mean ratio (VMR), which is also a ratio-type estimator. 

Besides the Gini index, there are other measures of income inequality, such as the Theil index and the Atkinson index \citep{theil1967,ATKINSON1970}, both of which are ratio-type measures. Recently, Vila and Saulo \cite{vila2025} employed the same approach as Baydil et al.\ \cite{BAYDIL2025} and computed the expected values of the sample Theil index and the sample Atkinson index under the gamma distribution. It turns that the gamma-Dirichlet relationship can be utilized for computing the expectation of the sample Theil index and the sample Atkinson index. 

The rest of this note is organized as follows: Section~\ref{sec-G} presents an alternative proof for the unbiasedness of the sample Gini index. Section~\ref{sec-T} gives alternative proofs for computing the expectation of the sample Theil index and the sample Atkinson index. Section~\ref{sec-VMR} derives the expectation of the sample VMR. Section~\ref{sec-conclude} concludes.

\section{The Gini index under the gamma distribution}\label{sec-G}

Recall that the gamma density is defined as
\begin{align}\label{gamma-density}
f(y) = \frac{\lambda^\alpha}{\Gamma(\alpha)} y^{\alpha-1} e^{-\lambda y}, \quad y > 0,
\end{align}
where $\Gamma (\cdot)$ is the gamma function, $\alpha > 0$ is the shape parameter, and $\lambda > 0$ is the rate parameter. Throughout this note, let $Y$ be a nondegenerate and positive random variable with mean $0 < \mu = \E (Y) < \infty$. The Gini index of $Y$ is defined as 
\begin{align*}
G(Y) = \frac{ \E | Y_1 - Y_2 | }{ 2 \mu },
\end{align*}
where $Y_1$ and $Y_2$ are two independent and identically distributed (i.i.d.) copies of $Y$. If $Y$ follows the gamma density in (\ref{gamma-density}), it has been shown that 
\begin{align}\label{G-gamma}
G(Y) = \frac{ \Gamma (\alpha + 1/2) }{ \sqrt{\pi} \; \Gamma(\alpha + 1) },
\end{align}
which does not depend on the rate parameter $\lambda$; see \citep{McDonald1979}.

Let $Y_1,\ldots,Y_n$ be i.i.d.\ random samples from the same population. The sample Gini index is given by
\begin{align}\label{G-est}
G_n = \frac{1}{n-1} \frac{ \sum_{1 \leq i < j \leq n} | Y_i - Y_j | }{ \sum_{i = 1}^n Y_i }, \quad n \geq 2.
\end{align}
It is obvious that $G_n$ is a consistent estimator for $G$, i.e., $G_n \to G$ in probability as $n \to \infty$. Ratio-type estimators are generally expected to be biased, but Baydil et al.\ \cite{BAYDIL2025} showed that $G_n$ is, in fact, an unbiased estimator for $G$ under the gamma distribution.

\begin{theorem}[Baydil et al., \citep{BAYDIL2025}]
Let $Y_1,\ldots,Y_n$ be i.i.d.\ copies of $Y$ which follows the gamma density given in (\ref{gamma-density}). Then $\E ( G_n ) = G(Y)$ in (\ref{G-gamma}).
\end{theorem}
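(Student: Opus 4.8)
The plan is to reduce the whole computation to a single expectation via exchangeability, and then to split that expectation into a product of two independent pieces using Lukacs' Theorem. Write $S_n = \sum_{i=1}^n Y_i$. Because $(Y_1,\ldots,Y_n)$ is exchangeable, each of the $\binom{n}{2}$ terms $\E(|Y_i-Y_j|/S_n)$ with $i<j$ equals $\E(|Y_1-Y_2|/S_n)$, so
\[
\E(G_n) \;=\; \frac{1}{n-1}\binom{n}{2}\,\E\!\left(\frac{|Y_1-Y_2|}{S_n}\right) \;=\; \frac{n}{2}\,\E\!\left(\frac{|Y_1-Y_2|}{S_n}\right),
\]
and everything hinges on evaluating the right-hand expectation.

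Next I would peel the pair $(Y_1,Y_2)$ off from the rest by setting $U=Y_1+Y_2$ and $Z=Y_3+\cdots+Y_n$ (with $Z\equiv 0$ when $n=2$), so that $S_n=U+Z$ and
\[
\frac{|Y_1-Y_2|}{S_n} \;=\; \left|\frac{2Y_1}{U}-1\right|\cdot\frac{U}{S_n}.
\]
The first factor depends only on the proportion $Y_1/U$. By Lukacs' Theorem $Y_1/U$ is independent of $U$; being a function of $(Y_1,Y_2)$ it is also independent of $Z$; hence it is independent of $(U,Z)$ and in particular of the second factor $U/S_n=U/(U+Z)$. Moreover $U\sim\mathrm{Gamma}(2\alpha,\lambda)$ and $Z\sim\mathrm{Gamma}((n-2)\alpha,\lambda)$ are independent gamma variables with common rate, so the gamma--beta relationship gives $U/S_n\sim\mathrm{Beta}(2\alpha,(n-2)\alpha)$. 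Consequently
\[
\E\!\left(\frac{|Y_1-Y_2|}{S_n}\right) \;=\; \E\!\left|\frac{2Y_1}{U}-1\right|\cdot\E\!\left(\frac{U}{S_n}\right) \;=\; \frac{2}{n}\,\E\!\left|\frac{2Y_1}{U}-1\right|,
\]
using $\E(U/S_n)=2\alpha/(n\alpha)=2/n$ (which is trivially $1$ when $n=2$). Plugging this into the display above yields $\E(G_n)=\E|2Y_1/U-1|$.

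Finally I would identify $\E|2Y_1/U-1|$ with $G(Y)$. Writing $Y_1-Y_2=U(2Y_1/U-1)$ and invoking once more the independence of $Y_1/U$ and $U$, one gets $\E|Y_1-Y_2|=\E(U)\,\E|2Y_1/U-1|=2\mu\,\E|2Y_1/U-1|$, so $\E|2Y_1/U-1|=\E|Y_1-Y_2|/(2\mu)=G(Y)$ by the definition of the Gini index; the closed form (\ref{G-gamma}) then completes the proof. (One could instead evaluate $\E|2Y_1/U-1|$ directly from the $\mathrm{Beta}(\alpha,\alpha)$ density of $Y_1/U$, but this merely reintroduces the integral that (\ref{G-gamma}) has already settled.) I expect the one genuinely substantive point to be the factorization $|Y_1-Y_2|/S_n=|2Y_1/U-1|\cdot (U/S_n)$ together with the observation that Lukacs' Theorem makes the two factors independent; once that structure is spotted, the remaining steps are bookkeeping. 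A slicker variant dispenses with the factorization: by the gamma--Dirichlet relationship the whole vector $(Y_1/S_n,\ldots,Y_n/S_n)$ is independent of $S_n$, hence so is $|Y_1-Y_2|/S_n$, whence $\E|Y_1-Y_2|=\E(S_n)\,\E(|Y_1-Y_2|/S_n)=n\mu\,\E(|Y_1-Y_2|/S_n)$ and the conclusion follows immediately.
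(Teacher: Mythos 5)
Your proposal is correct and follows essentially the same route as the paper: the same reduction to $\E(|Y_1-Y_2|/S_n)$ by exchangeability, the same factorization into $|2Y_1/(Y_1+Y_2)-1|$ times $(Y_1+Y_2)/S_n$, and the same appeal to Lukacs' Theorem plus the gamma--beta relationship to split the expectation and get the factor $2/n$. The only (minor, and rather elegant) difference is your last step: instead of evaluating the $\mathrm{Beta}(\alpha,\alpha)$ integral for $\E|2R-1|$ as the paper does, you identify $\E|2R-1|=\E|Y_1-Y_2|/(2\mu)=G(Y)$ directly via $\E|Y_1-Y_2|=\E(Y_1+Y_2)\,\E|2R-1|$, which the paper itself records as a corollary after its proof.
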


As mentioned in Baydil et al.\ \cite{BAYDIL2025}, the main challenge in evaluating $\E (G_n)$ lies in the denominator in (\ref{G-est}). To handle this term, they rewrite the expectation as follows:
\begin{align*}
\E ( G_n )
&= \frac{1}{n-1} \E \left\{ \frac{ \sum_{1 \leq i < j \leq n} | Y_i - Y_j | }{ \sum_{i = 1}^n Y_i } \right\} \\
&= \frac{1}{n-1} \E \left\{ \sum_{1 \leq i < j \leq n} | Y_i - Y_j | \int_0^\infty e^{-z \sum_{i = 1}^n Y_i } dz \right\},
\end{align*}
where the last equality follows from the identity
\begin{align}\label{id}
\frac{1}{s} = \int_0^\infty e^{-z s} dz, \quad s > 0.
\end{align}
By using the Laplace transform of the gamma random variable along with some complicated integration, they managed to show that $\E ( G_n ) = G$, thus establishing that $G_n$ is an unbiased estimator for $G$ under the gamma distribution.

We now present an alternative proof for the unbiasedness of $G_n$ under the gamma distribution via Lukacs' Theorem and the gamma-beta relationship.

\begin{proof}
It suffices to consider
\begin{align*}
\E \left\{ \frac{ | Y_1 - Y_2 | }{ \sum_{i = 1}^n Y_i } \right\} = \E \left\{ \frac{Y_1 + Y_2}{ Y_1 + Y_2 + \sum_{i = 3}^n Y_i } \left| \frac{2 Y_1}{Y_1 + Y_2} - 1 \right| \right\}.
\end{align*}
First, note that the pair $( Y_1+Y_2, Y_1 / (Y_1 + Y_2) )$ and the vector $(Y_3,\ldots,Y_n)$ are independent. In addition, since $Y_1$ and $Y_2$ are i.i.d.\ gamma random variables, we also have the independence between $Y_1+Y_2$ and $Y_1 / (Y_1 + Y_2)$ according to Lukacs' Theorem \citep{Lukacs1955}. Therefore, $Y_1 + Y_2$, $Y_1 / (Y_1 + Y_2)$, and $(Y_3,\ldots,Y_n)$ are mutually independent. Consequently, the above expectation equals to
\begin{align*}
\E \left\{ \frac{Y_1 + Y_2}{ Y_1 + Y_2 + \sum_{i = 3}^n Y_i } \right\} \E \left\{ \left| \frac{2 Y_1}{Y_1 + Y_2} - 1 \right| \right\}.
\end{align*}
We now compute the expectations separately. Note that $(Y_1 + Y_2)/ \sum_{i = 1}^n Y_i$ follows the beta distribution with shape parameters $2 \alpha$ and $(n-2) \alpha$. We have
\begin{align*}
\E \left\{ \frac{Y_1 + Y_2}{ Y_1 + Y_2 + \sum_{i = 3}^n Y_i } \right\} = \frac{2}{n}.
\end{align*}
On the other hand, the random variable $R = Y_1 / (Y_1 + Y_2)$ follows the beta distribution with both shape parameters equal to $\alpha$. We have
\begin{align*}
\E | 2R - 1 |
&= \frac{1}{B(\alpha,\alpha)} \int_0^1 | 2r - 1 | r^{\alpha-1} (1-r)^{\alpha-1} dr \\
&= \frac{2}{B(\alpha,\alpha)} \int_{1/2}^1 (2r - 1) r^{\alpha-1} (1-r)^{\alpha-1} dr,
\end{align*}
where $B(\cdot,\cdot)$ is the beta function and the last equality follows from the symmetry of the integrand about $r = 1/2$. After making a change of variable $u = (2r-1)^2$, the integral transforms to
\begin{align*}
\E | 2R - 1 | = \frac{1}{2^{2\alpha-1} B(\alpha,\alpha)} \int_0^1 (1-u)^{\alpha-1} du
&= \frac{\Gamma(2\alpha)}{2^{2\alpha-1} \alpha \Gamma(\alpha)^2} \\
&= \frac{\Gamma(\alpha+1/2)}{\sqrt{\pi} \; \Gamma(\alpha+1)},
\end{align*}
where the last equality follows from the duplication formula \citep[Eq.\ (5.5.5)]{NIST:DLMF}, i.e., $\Gamma (\alpha) \Gamma (\alpha+1/2) = 2^{1-2\alpha} \sqrt{\pi} \Gamma (2\alpha) $, and the identity $\alpha \Gamma(\alpha) = \Gamma(\alpha+1)$. Combining all the results, we arrive at
\begin{align*}
\E ( G_n ) = \frac{1}{n-1} \E \left\{ \frac{ \sum_{1 \leq i < j \leq n} | Y_i - Y_j | }{ \sum_{i = 1}^n Y_i } \right\}
&= \frac{1}{n-1} \; \binom{n}{2} \; \E \left\{ \frac{ | Y_1 - Y_2 | }{ \sum_{i = 1}^n Y_i } \right\} \\
&= \frac{\Gamma(\alpha+1/2)}{\sqrt{\pi} \; \Gamma(\alpha+1)}.
\end{align*}
Hence $G_n$ is an unbiased estimator for $G$ under the gamma distribution.
\end{proof}

It is clear that our new proof relies on Lukacs' Theorem, i.e., the independence between $Y_1 + Y_2$ and $Y_1 / (Y_1 + Y_2)$, which is an unique characterization of the gamma distribution. In fact, the same approach can be used to compute the population Gini index $G(Y)$. Indeed, under the gamma distribution, 
\begin{align*}
\E | Y_1 - Y_2 | = \E ( Y_1 + Y_2 ) \E | 2R - 1 | = 2 \mu \frac{\Gamma(\alpha+1/2)}{\sqrt{\pi} \; \Gamma(\alpha+1)}.
\end{align*}
After cancelling out $2 \mu$, we obtain the population Gini index (\ref{G-gamma}).

\begin{remark}
Our proof does not imply that $\E ( G_n ) = G(Y)$ if and only if $Y$ follows the gamma distribution. It would be unrealistic to expect that a moment condition is able to enforce the underlying random variable to follow a specific distribution. For instance, one can directly verify that for a discrete random variable $Y$ satisfying
\begin{align*}
\Pr (Y = a) = \Pr (Y = b) = 1/2, \quad 0 < a < b < \infty,
\end{align*}
the sample Gini index $G_n$ is unbiased for $G(Y)$ when $n = 2$.
\end{remark}



\section{The Theil and Atkinson indices under the gamma distribution}\label{sec-T}

In this section, we apply the gamma-beta and the gamma-Dirichlet relationship to compute the sample Theil index and the sample Atkinson index.

\subsection{The Theil index}

The Theil $T$ index of $Y$ is defined as
\begin{align*}
T(Y) = \E \left\{ \frac{Y}{\mu} \log \left( \frac{Y}{\mu} \right) \right\}.
\end{align*}
If $Y$ follows the gamma density in (\ref{gamma-density}), it has been shown that 
\begin{align*}
T(Y) = \psi (\alpha) + \frac{1}{\alpha} - \log (\alpha),
\end{align*}
which does not depend on the rate parameter $\lambda$ and $\psi (\cdot)$ is the digamma function; see \citep{McDonald1979}. The sample Theil $T$ index is given by
\begin{align*}
T_n = \frac{\sum_{i = 1}^n Y_i \log ( Y_i / \mu_n )}{\sum_{i = 1}^n Y_i}, \quad n \geq 1,
\end{align*}
where $\mu_n = \sum_{i = 1}^n Y_i / n$ is the sample mean. Vila and Saulo \cite{vila2025} followed the same approach as Baydil et al.\ \cite{BAYDIL2025} to compute the expectation of $T_n$ under the gamma distribution. Here we state the result in Vila and Saulo \cite{vila2025} and provide a simple alternative proof.

\begin{theorem}[Vila and Saulo, \cite{vila2025}]
Let $Y_1,\ldots,Y_n$ be i.i.d.\ copies of $Y$ which follows the gamma density given in (\ref{gamma-density}). Then
\begin{align*}
\E ( T_n ) = \psi (\alpha) + \frac{1}{\alpha} + \log (n) - \psi (n \alpha) - \frac{1}{n \alpha}.
\end{align*}
\end{theorem}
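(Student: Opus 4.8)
The plan is to use the gamma--Dirichlet relationship to rewrite $T_n$ purely in terms of the proportions $P_i = Y_i / S$, where $S = \sum_{j=1}^n Y_j$, thereby eliminating the awkward denominator that caused trouble in the existing proof. Since $\mu_n = S/n$ we have $Y_i / \mu_n = n P_i$, hence
\begin{align*}
T_n = \frac{1}{S}\sum_{i=1}^n Y_i \log(n P_i) = \log n + \sum_{i=1}^n P_i \log P_i .
\end{align*}
This is an algebraic identity valid for any positive sample; the distributional assumption enters only through the law of $(P_1,\dots,P_n)$. Because $Y_1,\dots,Y_n$ are i.i.d.\ gamma with a common rate, $(P_1,\dots,P_n)$ follows the Dirichlet distribution with all parameters equal to $\alpha$; note that, unlike the Gini case, the independence between $S$ and the proportions supplied by Lukacs' Theorem is not needed here, only the Dirichlet marginal. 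In particular $T_n = \log n - H$, where $H = -\sum_i P_i \log P_i \in [0,\log n]$ is the Shannon entropy of $(P_1,\dots,P_n)$, so $0 \le T_n \le \log n$ and all expectations below are finite.

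By the exchangeability of the Dirichlet coordinates, $\E(T_n) = \log n + n\,\E(P_1 \log P_1)$, and the marginal law of $P_1$ is $\mathrm{Beta}(\alpha,(n-1)\alpha)$. To evaluate $\E(P_1 \log P_1)$ I would differentiate the moment function
\begin{align*}
g(s) = \E(P_1^{\,s}) = \frac{\Gamma(\alpha+s)\,\Gamma(n\alpha)}{\Gamma(\alpha)\,\Gamma(n\alpha+s)}, \qquad g'(s) = g(s)\bigl\{\psi(\alpha+s) - \psi(n\alpha+s)\bigr\},
\end{align*}
and evaluate at $s = 1$, using $g(1) = \E(P_1) = 1/n$, to obtain $\E(P_1\log P_1) = g'(1) = \tfrac{1}{n}\bigl\{\psi(\alpha+1) - \psi(n\alpha+1)\bigr\}$.

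Substituting back gives $\E(T_n) = \log n + \psi(\alpha+1) - \psi(n\alpha+1)$, and applying the recurrence $\psi(x+1) = \psi(x) + 1/x$ to each digamma term yields the stated formula $\psi(\alpha) + 1/\alpha + \log n - \psi(n\alpha) - 1/(n\alpha)$.

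I do not expect a genuine obstacle. The one real idea is the reduction $T_n = \log n + \sum_i P_i \log P_i$, which rests on the elementary observation $Y_i/\mu_n = n P_i$ together with the gamma--Dirichlet relationship; once this is in place, the argument reduces to the Beta marginal, a single logarithmic derivative of a ratio of gamma functions, and the digamma recurrence, all routine. The only point needing a word of justification is the interchange of differentiation and expectation defining $g'(s)$, which is standard for Beta moments on a neighbourhood of $s = 1$.
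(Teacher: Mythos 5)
Your proposal is correct and follows essentially the same route as the paper: both reduce the problem via $Y_i/\mu_n = nP_i$ to the expectation of $P_1\log P_1$ for a $\mathrm{Beta}(\alpha,(n-1)\alpha)$ marginal, evaluate it by differentiating a ratio of gamma functions (you differentiate $\E(P_1^s)$ in $s$, the paper differentiates $B(a+1,b)$ in the shape parameter $a$ --- the same computation), and finish with the digamma recurrence. The only cosmetic difference is that you phrase the reduction through the full Dirichlet vector and exchangeability, while the paper works with a single term and multiplies by $n$.
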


\begin{proof}
It suffices to consider
\begin{align*}
\E \left\{ \frac{ Y_1 \log ( Y_1 / \mu_n )}{\sum_{i = 1}^n Y_i } \right\} = \E \left\{ \frac{ Y_1}{\sum_{i = 1}^n Y_i } \log \left( \frac{ Y_1}{\sum_{i = 1}^n Y_i } \right) \right\} + \E \left( \frac{ Y_1}{\sum_{i = 1}^n Y_i } \right) \log (n).
\end{align*}
Since the random variable $W = Y_1 / \sum_{i = 1}^n Y_i$ follows the beta distribution with shape parameters $\alpha$ and $(n-1) \alpha$, we only need to compute the expectation of $W \log (W)$. For a beta random variable $U$ with the shape parameters $a > 0$ and $b > 0$, we have
\begin{align}\label{exp-beta}
\E \{ U \log (U) \}
&= \frac{1}{B(a,b)} \int_0^1 u \log(u) u^{a-1} (1-u)^{b-1} du \nonumber \\
&= \frac{1}{B(a,b)} \int_0^1 \frac{\partial}{\partial a} u^{a} (1-u)^{b-1} du \nonumber \\
&= \frac{1}{B(a,b)} \frac{\partial}{\partial a} \int_0^1 u^{a} (1-u)^{b-1} du \nonumber \\
&= \frac{1}{B(a,b)} \frac{\partial}{\partial a} B(a+1,b) \nonumber \\
&= \frac{a}{a+b} \{ \psi (a+1) - \psi (a+b+1) \},
\end{align}
where the third equality holds due to the fact that the beta density belongs to the exponential family. Setting $a$ and $b$ as $\alpha$ and $(n-1)\alpha$ in (\ref{exp-beta}), respectively, we obtain
\begin{align*}
\E \{ W \log (W) \}
&= \frac{1}{n} \{ \psi (\alpha +1) - \psi (n \alpha +1) \} \\
&= \frac{1}{n} \left\{ \psi (\alpha) + \frac{1}{\alpha} - \psi (n \alpha) - \frac{1}{n\alpha} \right\},
\end{align*}
where $\psi (\alpha + 1) = \psi (\alpha) + 1 / \alpha$. Finally, we arrive at
\begin{align*}
\E ( T_n ) = n \E \left\{ \frac{ Y_1 \log ( Y_1 / \mu_n )}{\sum_{i = 1}^n Y_i } \right\} = \psi (\alpha) + \frac{1}{\alpha} + \log (n) - \psi (n \alpha) - \frac{1}{n \alpha}.
\end{align*}
Hence $T_n$ is biased under the gamma distribution.
\end{proof}

\subsection{The Atkinson index}

The Atkinson index of $Y$ is defined as
\begin{align*}
A(Y) = 1 - \exp \left[ - \E \left\{ \log \left( \frac{\mu}{Y} \right) \right\} \right],
\end{align*}
which may also be defined through the so-called Theil $L$ index \citep{theil1967,vila2025}. If $Y$ follows the gamma density in (\ref{gamma-density}), it has been shown that 
\begin{align*}
A(Y) = 1 - \frac{\exp \{ \psi (\alpha) \} }{\alpha},
\end{align*}
which does not depend on the rate parameter $\lambda$; see \citep{vila2025}. The sample Atkinson index is given by
\begin{align*}
A_n = 1 - \frac{ \left( \prod_{i = 1}^n Y_i \right)^{1/n}}{\mu_n}, \quad n \geq 1.
\end{align*}
In a similar fashion, we state the result of $\E (A_n)$ under the gamma distribution in Vila and Saulo \cite{vila2025} and provide a simple alternative proof.

\begin{theorem}[Vila and Saulo, \cite{vila2025}]
Let $Y_1,\ldots,Y_n$ be i.i.d.\ copies of $Y$ which follows the gamma density given in (\ref{gamma-density}). Then
\begin{align*}
\E ( A_n ) = 1 - \frac{\Gamma^n (\alpha + 1/n)}{\alpha \Gamma^n (\alpha)}.
\end{align*}
\end{theorem}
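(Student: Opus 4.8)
The plan is to mirror the structure of the previous two proofs: reduce the expectation of $A_n$ to a single ratio expectation and then exploit the gamma--Dirichlet relationship. Since $\mu_n = \sum_{i=1}^n Y_i / n$, we have
\begin{align*}
\E ( A_n ) = 1 - n \, \E \left\{ \frac{ \left( \prod_{i=1}^n Y_i \right)^{1/n} }{ \sum_{i=1}^n Y_i } \right\}.
\end{align*}
The key observation is the elementary identity
\begin{align*}
\frac{ \left( \prod_{i=1}^n Y_i \right)^{1/n} }{ \sum_{i=1}^n Y_i } = \prod_{i=1}^n \left( \frac{ Y_i }{ \sum_{j=1}^n Y_j } \right)^{1/n},
\end{align*}
which rewrites the ratio purely in terms of the proportion vector, absorbing the troublesome denominator.

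Next I would invoke the gamma--Dirichlet relationship: since $Y_1,\dots,Y_n$ are i.i.d.\ gamma with shape $\alpha$ and common rate, the vector $(D_1,\dots,D_n) = ( Y_1/S, \dots, Y_n/S )$ with $S = \sum_{j=1}^n Y_j$ follows the Dirichlet distribution with all parameters equal to $\alpha$. It then remains to evaluate the mixed moment $\E \{ \prod_{i=1}^n D_i^{1/n} \}$. This comes straight out of the Dirichlet density: integrating $\prod_i d_i^{\alpha + 1/n - 1}$ over the simplex yields the ratio of normalizing constants
\begin{align*}
\E \left\{ \prod_{i=1}^n D_i^{1/n} \right\} = \frac{ \Gamma(n\alpha) }{ \Gamma(n\alpha + 1) } \prod_{i=1}^n \frac{ \Gamma(\alpha + 1/n) }{ \Gamma(\alpha) } = \frac{1}{n\alpha} \, \frac{ \Gamma^n(\alpha + 1/n) }{ \Gamma^n(\alpha) },
\end{align*}
using $\Gamma(n\alpha+1) = n\alpha\,\Gamma(n\alpha)$; this is the same kind of beta/Dirichlet-integral manipulation already used in (\ref{exp-beta}), and one may alternatively just cite the standard Dirichlet moment formula. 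Substituting back gives $\E(A_n) = 1 - n \cdot \frac{1}{n\alpha}\,\Gamma^n(\alpha+1/n)/\Gamma^n(\alpha)$, which is precisely the claimed expression.

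I do not expect any real obstacle here: the only mildly non-obvious step is spotting the product identity that converts the geometric-mean-to-arithmetic-mean ratio into a product of Dirichlet coordinates, after which everything collapses to a single Dirichlet integral that converges for every $\alpha > 0$ and $n \ge 1$ (note $\alpha + 1/n > 0$). Note also that, unlike the Gini proof, Lukacs' Theorem is not needed — only the marginal Dirichlet law of the proportion vector — and, unlike the Laplace-transform route of Vila and Saulo, no integral representation of $1/S$ is required, since the denominator is swallowed by the Dirichlet normalization. As a sanity check, $n = 1$ gives $\E(A_1) = 1 - \Gamma(\alpha+1)/(\alpha\Gamma(\alpha)) = 0$, consistent with $A_1 \equiv 0$.
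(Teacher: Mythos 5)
Your proposal is correct and follows essentially the same route as the paper: rewrite $A_n = 1 - n\prod_{i=1}^n (Y_i/\sum_j Y_j)^{1/n}$, note that the proportion vector is Dirichlet with all parameters $\alpha$, and apply the Dirichlet product-moment formula. The only cosmetic difference is that you verify the moment by direct integration over the simplex rather than citing the formula, and you add a sanity check at $n=1$; the substance is identical.
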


\begin{proof}
Define $Z_j = Y_j / \sum_{i = 1}^n Y_i$ for $j = 1,\ldots,n$. We rewrite
\begin{align*}
A_n = 1 - \frac{ \left( \prod_{i = 1}^n Y_i \right)^{1/n}}{\mu_n} = 1 - n \prod_{i = 1}^n \left( \frac{Y_i}{\sum_{i = 1}^n Y_i} \right)^{1/n} = 1 - n \prod_{i = 1}^n Z_i^{1/n}.
\end{align*}
Since the random vector $(Z_1,\ldots,Z_n)$ follows the Dirichlet distribution with all shape parameters equal to $\alpha$, by the product moment formula under the Dirichlet distribution \citep[Eq.\ (49.7)]{kotz2019}, we obtain
\begin{align*}
\E \left( \prod_{i = 1}^n Z_i^{1/n} \right) = \frac{\Gamma (n \alpha)}{\Gamma (n \alpha +1)} \frac{ \prod_{i = 1}^n \Gamma (\alpha + 1/n) }{ \prod_{i = 1}^n \Gamma (\alpha) } = \frac{\Gamma^n (\alpha + 1/n)}{n \alpha \Gamma^n (\alpha)}.
\end{align*}
Consequently,
\begin{align*}
\E (A_n) = 1 - \frac{\Gamma^n (\alpha + 1/n)}{\alpha \Gamma^n (\alpha)}.
\end{align*}
Hence $A_n$ is biased under the gamma distribution.
\end{proof}

\section{The variance-to-mean ratio under the gamma distribution}\label{sec-VMR}

The variance-to-mean ratio (VMR) of $Y$ is defined as 
\begin{align*}
\text{VMR}(Y) = \frac{\sigma^2}{\mu},
\end{align*}
provided that $\sigma^2 = \Var(Y) < \infty$. If $Y$ follows the gamma density in (\ref{gamma-density}), it is straightforward to obtain that
\begin{align*}
\text{VMR}(Y) = \frac{1}{\lambda}.
\end{align*}
The sample VMR is given by
\begin{align*}
\text{VMR}_n = \frac{1}{n-1} \frac{ \sum_{i = 1}^n ( Y_i - \mu_n )^2 }{ \mu_n }, \quad n \geq 2.
\end{align*}
Below we demonstrate that Lukacs' Theorem can be applied to compute the expectation of $\text{VMR}_n$.

\begin{theorem}\label{thm-vrm}
Let $Y_1,\ldots,Y_n$ be i.i.d.\ copies of $Y$ which follows the gamma density given in (\ref{gamma-density}). Then
\begin{align*}
\E({\mathrm{VMR}}_n) = \frac{n \alpha}{(n \alpha + 1) \lambda}.
\end{align*}
\end{theorem}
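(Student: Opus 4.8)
The plan is to rewrite $\mathrm{VMR}_n$ entirely in terms of the total sum $S=\sum_{i=1}^n Y_i$ and the ratios $Y_i^2/S$, so that Lukacs' Theorem can again be brought to bear. Using $\mu_n=S/n$ and the identity $\sum_{i=1}^n (Y_i-\mu_n)^2=\sum_{i=1}^n Y_i^2-S^2/n$, one obtains
\begin{align*}
\mathrm{VMR}_n=\frac{n}{n-1}\left(\frac{\sum_{i=1}^n Y_i^2}{S}-\frac{S}{n}\right),
\end{align*}
and hence, by the i.i.d.\ assumption, $\E(\mathrm{VMR}_n)=\dfrac{n}{n-1}\bigl(n\,\E(Y_1^2/S)-\E(S)/n\bigr)$. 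So the whole problem reduces to evaluating the single ratio-type expectation $\E(Y_1^2/S)$.

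For that term I would set $W=Y_1/S$ and write $Y_1^2/S=W^2 S$. Since $Y_1$ and $\sum_{i=2}^n Y_i$ are independent gamma variables with the common rate parameter $\lambda$, Lukacs' Theorem gives the independence of $W$ and $S$; moreover $W$ follows the beta distribution with shape parameters $\alpha$ and $(n-1)\alpha$. Therefore $\E(Y_1^2/S)=\E(W^2)\,\E(S)$, and both factors are elementary: the second moment of the beta distribution gives $\E(W^2)=\dfrac{\alpha(\alpha+1)}{n\alpha(n\alpha+1)}$, and $\E(S)=n\alpha/\lambda$.

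Substituting these back, the final step is a short simplification whose only notable feature is the cancellation
\begin{align*}
\frac{n(\alpha+1)}{n\alpha+1}-1=\frac{n-1}{n\alpha+1},
\end{align*}
which exactly removes the factor $n-1$ in the denominator and leaves $\E(\mathrm{VMR}_n)=\dfrac{n\alpha}{(n\alpha+1)\lambda}$. I do not expect any genuine obstacle here: the one point that must be stated carefully is that Lukacs' Theorem is applied to the pair $\bigl(Y_1,\sum_{i=2}^n Y_i\bigr)$, yielding the independence of $W=Y_1/S$ and $S$; everything else is routine moment computation for the beta and gamma distributions.
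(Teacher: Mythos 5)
Your proposal is correct and follows essentially the same route as the paper: rewrite $\mathrm{VMR}_n$ in terms of $\sum_i Y_i^2/S$ and $S$, apply Lukacs' Theorem to get independence of $W=Y_1/S$ and $S$, and finish with the second moment of the $\mathrm{Beta}(\alpha,(n-1)\alpha)$ distribution. Your expression $\E(W^2)=\alpha(\alpha+1)/\{n\alpha(n\alpha+1)\}$ agrees with the paper's variance-plus-squared-mean form, and the algebraic simplification is the same.
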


\begin{proof}
We begin by rewriting
\begin{align*}
\text{VMR}_n 
&= \frac{1}{n-1} \frac{ \sum_{i = 1}^n Y_i^2 - n \mu_n^2 }{ \mu_n } \\
&= \frac{n}{n-1} \sum_{i = 1}^n \left( \frac{Y_i}{\sum_{i = 1}^n Y_i} \right)^2 \left( \sum_{i = 1}^n Y_i \right) - \frac{1}{n-1} \sum_{i = 1}^n Y_i.
\end{align*}
By Lukacs' Theorem, $Y_j / \sum_{i = 1}^n Y_i$ is independent of $\sum_{i = 1}^n Y_i$. Moreover, $Y_j / \sum_{i = 1}^n Y_i$ follows the beta distribution with shape parameters $\alpha$ and $(n-1)\alpha$ for all $j = 1,\ldots,n$. Therefore, we have
\begin{align*}
\E \left\{ \left( \frac{Y_1}{\sum_{i = 1}^n Y_i} \right)^2 \left( \sum_{i = 1}^n Y_i \right) \right\}
&= \E \left( \frac{Y_1}{\sum_{i = 1}^n Y_i} \right)^2 \E \left( \sum_{i = 1}^n Y_i \right) \\
&= \left( \frac{n-1}{n^2(n \alpha +1)} + \frac{1}{n^2} \right) \frac{n \alpha}{\lambda}.
\end{align*}
Consequently,
\begin{align*}
\E ( \text{VMR}_n ) = \frac{n}{n-1} \left( \frac{n-1}{n^2(n \alpha +1)} + \frac{1}{n^2} \right) \frac{n \alpha}{\lambda} - \frac{n}{n-1} \frac{\alpha}{\lambda} = \frac{n \alpha}{(n \alpha + 1) \lambda}.
\end{align*}
\end{proof}

Theorem~\ref{thm-vrm} implies that if the random samples are drawn from the gamma distribution, the sample VMR tends to be biased downward:
\begin{align*}
\E ( \text{VMR}_n ) - \text{VMR}(Y) = \frac{n \alpha}{(n \alpha + 1) \lambda} - \frac{1}{\lambda} = -\frac{1}{(n \alpha + 1) \lambda} < 0.
\end{align*}

\section{Concluding remarks}\label{sec-conclude}

This note illustrates that the distributional properties of the gamma distribution, namely Lukacs' Theorem and the gamma-beta (gamma-Dirichlet) relationship, are extremely powerful for deriving the expected values of ratio-type estimators. Although we present the results only for measures of inequality, i.e., dispersion indices, the method may also be applied to other ratio-type statistical measures.

\section*{Acknowledgments}

The author thank Po-Han Hsu and Chee Han Tan for helpful discussions. J.-H.\ Shih is funded by National Science and Technology Council of Taiwan (NSTC 112-2118-M-110-004-MY2).

\section*{Declaration}

The author declare that there is no conflict of interest.

\bibliographystyle{plain}
\bibliography{paper-ref}

\end{document}